\definecolor{verylight}{gray}{0.97}
\definecolor{light}{gray}{0.9}
\definecolor{medium}{gray}{0.85}
\def\frk{\mathfrak}               
\def\pp{{\frk p}}
\def\mm{{\frk m}}
\def\Phi{{\frk N}}
\def\opn#1#2{\def#1{\operatorname{#2}}} 
\opn\chara{char} \opn\length{\ell} \opn\pd{pd} \opn\rk{rk}
\opn\projdim{proj\,dim} \opn\injdim{inj\,dim} \opn\rank{rank}
\opn\depth{depth} \opn\grade{grade} \opn\height{height}
\opn\size{size}
\opn\embdim{emb\,dim} \opn\codim{codim}
\opn\Tr{Tr} \opn\bigrank{big\,rank}
\opn\superheight{superheight}\opn\lcm{lcm}
\opn\trdeg{tr\,deg}
\opn\reg{reg} \opn\lreg{lreg} \opn\ini{in} \opn\lpd{lpd}
\opn\size{size}\opn{\mult}{mult}
\opn{\Cl}{Cl}\opn{\trdeg}{trdeg}
\opn\div{div} \opn\Div{Div} \opn\cl{cl} \opn\Cl{Cl}
\opn\Spec{Spec} \opn\Supp{Supp} \opn\supp{supp} \opn\Sing{Sing}
\opn\Ass{Ass} \opn\Min{Min} \opn\cl{cl}
\opn\Ann{Ann} \opn\Rad{Rad} \opn\Soc{Soc}
\opn\Syz{Syz} \opn\Im{Im} \opn\Ker{Ker} \opn\Coker{Coker}
\opn\Am{Am} \opn\Hom{Hom} \opn\Tor{Tor} \opn\Ext{Ext}
\opn\End{End} \opn\Aut{Aut} \opn\id{id} \opn\ini{in}
\opn\nat{nat}
\opn\pff{pf}
\opn\Pf{Pf} \opn\GL{GL} \opn\SL{SL} \opn\mod{mod} \opn\ord{ord}
\opn\Gin{Gin}
\opn\Hilb{Hilb}\opn\adeg{adeg}\opn\std{std}\opn\ip{infpt}
\opn\Pol{Pol}
\opn\sat{sat}
\opn\Var{Var}
\opn\Gen{Gen}
\opn\lex{lex}
\opn\div{div}
\opn\aff{aff} \opn\con{conv} \opn\relint{relint} \opn\st{st}
\opn\lk{lk} \opn\cn{cn} \opn\core{core} \opn\vol{vol}
\opn\link{link} \opn\star{star}
\opn\gr{gr}
\def\pot#1#2{#1[\kern-0.28ex[#2]\kern-0.28ex]}
\opn\dirlim{\underrightarrow{\lim}}
\opn\inivlim{\underleftarrow{\lim}}
\let\union=\cup
\let\to=\rightarrow
\def\Implies{\ifmmode\Longrightarrow \else
        \unskip${}\Longrightarrow{}$\ignorespaces\fi}
\def\implies{\ifmmode\Rightarrow \else
        \unskip${}\Rightarrow{}$\ignorespaces\fi}
\def\iff{\ifmmode\Longleftrightarrow \else
        \unskip${}\Longleftrightarrow{}$\ignorespaces\fi}
\newtheorem{Theorem}{Theorem}[section]
\newtheorem{Corollary}[Theorem]{Corollary}
\newtheorem{Proposition}[Theorem]{Proposition}
\let\epsilon\varepsilon
\let\phi=\varphi
\let\kappa=\varkappa
\def\qed{\ifhmode\textqed\fi
      \ifmmode\ifinner\quad\qedsymbol\else\dispqed\fi\fi}
\def\textqed{\unskip\nobreak\penalty50
       \hskip2em\hbox{}\nobreak\hfil\qedsymbol
       \parfillskip=0pt \finalhyphendemerits=0}
\def\dispqed{\rlap{\qquad\qedsymbol}}
\opn\dis{dis}
\def\pnt{{\raise0.5mm\hbox{\large\bf.}}}
\opn\Lex{Lex}
\opn\int{int}
\newcommand{\inD}[1][\relax]{\def\argone{#1}\def\temprelax{\relax}
  \ifx\argone\temprelax\right.\else\,\middle|#1\right.{}\fi}
\begin{document}
\title {Alexander duality for monomial ideals associated with isotone maps between posets}
\author {J\"urgen Herzog, Ayesha Asloob Qureshi and Akihiro Shikama}
\thanks{This paper was partially written during the visit of the  second and third author at Universit\"at Duisburg-Essen, Campus Essen. The second author was supported by JSPS Postdoctoral Fellowship Program for Foreign Researchers
}

\subjclass{13C05, 05E40, 13P10.}
\keywords{Alexander duality, isotone maps, letterplace  ideals}

\address{J\"urgen Herzog, Fachbereich Mathematik, Universit\"at Duisburg-Essen, Campus Essen, 45117
Essen, Germany} \email{juergen.herzog@uni-essen.de}

\address{Ayesha Asloob Qureshi, Department of Pure and Applied Mathematics, Graduate School of Information Science and Technology,
Osaka University, Toyonaka, Osaka 560-0043, Japan}
\email{ayesqi@gmail.com}
\address{Akihiro Shikama, Department of Pure and Applied Mathematics, Graduate School of Information Science and Technology,
Osaka University, Toyonaka, Osaka 560-0043, Japan}
\email{a-shikama@cr.math.sci.osaka-u.ac.jp}

\begin{abstract}
For a pair $(P,Q)$ of finite posets the generators of the ideal $L(P,Q)$ correspond bijectively to the isotone maps  from $P$ to $Q$. In this note we determine all pairs $(P,Q)$ for which the Alexander dual of $L(P,Q)$ coincides with $L(Q,P)$, up to a switch of the indices.
\end{abstract}
\maketitle

\section*{Introduction}
In \cite{HH}, Hibi and the first author introduced a class of monomial  ideals which nowadays are called Hibi ideals. Given a finite poset $P$,  the generators of Hibi ideals are squarefree monomials which correspond bijectively  to the poset ideals of $P$. Later this class of ideals was generalized by Ene, Mohammadi and the first author in \cite{EHM} by considering  squarefree monomial ideals whose generators correspond  to the  chains of poset ideals of given length in $P$. The ideals generated by such monomials are called generalized Hibi ideals. In that paper, the Alexander dual of a generalized Hibi ideal is determined and  is identified as a multichain ideal. The concept of generalized Hibi ideals and multichain ideals has been further generalized in \cite{FGH}  by  Fl{\o}ystad, Greve and the first author. To describe this class of ideals, let $P$ and $Q$ be finite posets. A map $\phi: P \rightarrow Q$ is called {\em isotone} if  it is order preserving. In other words, $\phi: P \rightarrow Q$ is isotone if and only if $\phi(p_1) \leq \phi(p_2) $ for all $p_1, p_2 \in P$ with $p_1 \leq p_2$. The set of isotone maps $P \rightarrow Q$ is denoted by $\Hom(P,Q)$. Now let $K$ be a field and $S$ be the polynomial ring over $K$ in the  indeterminates $x_{pq}$ with $p \in P$ and $q \in Q$. As in \cite{FGH}, we denote by $L(P,Q)$ the ideal generated by the monomials $u_{\phi}=\prod_{p \in P} x_{p \phi(p)}$ where $\phi \in \Hom(P,Q)$. Let $[n]$ be the totally ordered poset  with $1 < 2 < \cdots < n$. It is easily seen that a generalized Hibi ideal on $P$  is of the form $L(P,[n])$ while a multichain ideal on $Q$ is of the form $L([n],Q)$. In \cite{FGH}, the ideals $L([n],Q)$ and $L(P,[n])$ are called letterplace and co-letterplace ideals, respectively. The classical Hibi ideals can be identified with $L(P,[2])$.

According to Theorem 1.1 in \cite{EHM},  the Alexander dual $L (P,[n])^{\vee} $ of $L(P,[n])$ is equal to the ideal $L([n],P)^{\tau}$. Here, for any $P$ and $Q$, $L(Q,P)^{\tau}$ is obtained from $L(Q,P)$ by switching the indices. In the other words, $$L(Q,P)^{\tau}= (\prod_{q \in Q} x_{\psi(q)q} \:\;  \psi \in \Hom(Q,P) ).$$
An alternative proof of this fact is given \cite[Proposition 1.2]{FGH}. Since $(I^\vee)^\vee =I$ for any squarefree monomial ideal, one also has $L ([n], P)^{\vee}=L(P,[n])^\tau$.

One would expect that  $L(P,Q)^{\vee} = L(Q,P)^\tau$ in general. Unfortunately, this is not always the case as can be shown by simple examples. In this paper we determine all pairs $(P,Q)$ of posets for which this duality holds. For this classification, we use \cite[Lemma 1.1]{FGH} which says that  any isotone map $\phi\: P\to P$ of a finite poset $P$  has a fixpoint, given that $P$ has a unique minimal or maximal element.

\section{Alexander duality for $L(P,Q)$}

All posets considered in this paper are assumed to be finite. Recall that the direct sum of two posets $P$ and $Q$ on disjoint sets is the  poset $P+Q$ on $P \cup Q$ such that $x \leq y$ in $P+Q$ if either $x,y \in P$ and $x \leq y$ in $P$ or $x,y \in Q$ and $x \leq y$ in $Q$. A poset is called {\em connected} if it is  not a direct sum of two posets. Alternatively, $P$ is connected if for any $a,b \in P$, there exists a finite sequence $a=a_1, a_2, \ldots, a_n=b$ in $P$ such that $a_i$ and $a_{i+1}$ are comparable in $P$ for $i=1, \ldots, n-1$.

Let $\Min(L(P,Q))$ denotes the set of minimal prime ideals of $L(P,Q)$. By using \cite[Corollary 1.5.5]{HHBook} it follows immediately that
$L(P,Q)^\vee=L(Q,P)^\tau$ if and only if
\begin{eqnarray}
\label{min}
\Min(L(P,Q))=\{\pp_\psi\: \psi\in \Hom(Q,P)\},
\end{eqnarray}
where
\begin{eqnarray}
\label{psi}
\pp_\psi=(x_{\pi(q)q}\:\; q\in Q).
\end{eqnarray}

In \cite[Proposition 1.5]{FGH} the following result is shown

\begin{Proposition}
\label{fgh}
Let $P$ and $Q$ be posets and assume that $P$ has a unique maximal or minimal element. Then for any  $\pp\in \Min(L(P,Q))$ with $\height \pp\leq |Q|$, there exists  $\psi\in \Hom(Q,P)$ such that $\pp=\pp_\psi$, and $\pp_\psi\neq \pp_{\psi'}$ for $\psi,\psi'\in \Hom(Q,P)$ with $\psi\neq \psi'$.
\end{Proposition}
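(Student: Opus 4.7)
The plan is to recover $\psi$ directly from the squarefree monomial structure of $\pp$. Any minimal prime of a squarefree monomial ideal is generated by variables, so we may write $\pp=(x_{pq}\:\; p\in A_q,\ q\in Q)$ for some subsets $A_q\subseteq P$ indexed by $Q$. The goal is to show that, under the hypothesis $\height\pp\leq |Q|$, each $A_q$ is forced to be a singleton $\{\psi(q)\}$ and that the resulting map $\psi\:Q\to P$ is isotone.

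First I would observe that for every fixed $q_0\in Q$ the constant map $\phi\equiv q_0$ belongs to $\Hom(P,Q)$, so its associated generator $u_\phi=\prod_{p\in P}x_{p q_0}$ must lie in $\pp$. This forces $A_{q_0}\neq\emptyset$. Since $\height\pp=\sum_{q\in Q}|A_q|\leq |Q|$ and each $|A_q|\geq 1$, equality must hold term by term, giving $|A_q|=1$ for every $q$. Writing the unique element as $\psi(q)$, we obtain $\pp=\pp_\psi$ and a well-defined function $\psi\:Q\to P$.

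The main obstacle is to prove that $\psi$ is isotone. I would argue by contradiction, treating the case where $P$ has a unique minimal element $\hat 0$ (the case of a unique maximum being dual). Suppose there exist $q_1<q_2$ in $Q$ with $\psi(q_1)\not\leq\psi(q_2)$; in particular $\psi(q_1)\neq\hat 0$. Define $\phi\:P\to Q$ by
\[
\phi(p)=\begin{cases} q_2, & p\geq \psi(q_1),\\ q_1,& \text{otherwise.}\end{cases}
\]
Because $\{p\in P\: p\geq \psi(q_1)\}$ is an up-set of $P$ and $q_1\leq q_2$, this $\phi$ is isotone. For $u_\phi$ to belong to $\pp_\psi$ one would need some $p\in P$ with $\psi(\phi(p))=p$, forcing $p\in\{\psi(q_1),\psi(q_2)\}$. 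If $p=\psi(q_1)$ then $\phi(p)=q_2$ and $\psi(\phi(p))=\psi(q_2)\neq\psi(q_1)$; if $p=\psi(q_2)$ then $\psi(q_2)\not\geq\psi(q_1)$ gives $\phi(p)=q_1$ and $\psi(\phi(p))=\psi(q_1)\neq\psi(q_2)$. Either way, $u_\phi\notin\pp_\psi$, contradicting $L(P,Q)\subseteq\pp$. Hence $\psi$ is isotone, and the construction of the indicator-type map above is what I expect to be the crux of the argument.

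Finally, the injectivity statement is routine: if $\psi\neq\psi'$ in $\Hom(Q,P)$, pick $q\in Q$ with $\psi(q)\neq\psi'(q)$; since the only generator of $\pp_{\psi'}$ whose second index equals $q$ is $x_{\psi'(q)q}$, we conclude $x_{\psi(q)q}\in\pp_\psi\setminus\pp_{\psi'}$ and therefore $\pp_\psi\neq\pp_{\psi'}$.
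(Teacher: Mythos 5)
Your proof is correct, and it follows essentially the same route as the argument the paper relies on (the paper does not reprove this statement but cites \cite[Proposition 1.5]{FGH}): use constant maps to see each fiber $A_q$ is nonempty, let the height bound force $|A_q|=1$, and test isotonicity of the resulting $\psi$ with an indicator-type map into a two-element chain. One remark worth making: your argument never actually uses the hypothesis that $P$ has a unique maximal or minimal element --- that hypothesis enters elsewhere, namely in showing (via the fixed-point lemma \cite[Lemma 1.1]{FGH}, since $u_\phi\in\pp_\psi$ exactly when $\psi\circ\phi$ has a fixed point) that every $\pp_\psi$ with $\psi\in\Hom(Q,P)$ really contains $L(P,Q)$, which is what Corollary~\ref{height} needs; the direction stated here holds without it.
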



\medskip
As an immediate consequence of Proposition~\ref{fgh} one obtains

\begin{Corollary}
\label{height}
Let $P$ and $Q$ be posets and assume that $P$ has a unique maximal or minimal element. Then
\begin{enumerate}
\item[{\em (a)}] $\height L(P,Q)=|Q|$;

\item[{\em (b)}]  $L(P,Q)^\vee=L(Q,P)^\tau$  if and only if $\height \pp=|Q|$ for all $\pp\in \Min(L(P,Q))$.
\end{enumerate}
\end{Corollary}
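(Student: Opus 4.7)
The plan is to extract both (a) and (b) directly from Proposition~\ref{fgh} together with one additional observation: every prime $\pp_\psi$ with $\psi\in\Hom(Q,P)$ is actually a minimal prime of $L(P,Q)$.

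First I would verify that $L(P,Q)\subseteq\pp_\psi$ for every $\psi\in\Hom(Q,P)$. Given any $\phi\in\Hom(P,Q)$, the composition $\psi\circ\phi\colon P\to P$ is isotone, and since $P$ has a unique minimal or maximal element, \cite[Lemma~1.1]{FGH} supplies a fixpoint $p\in P$, so that $\psi(\phi(p))=p$. Then $x_{p,\phi(p)}=x_{\psi(\phi(p)),\phi(p)}$ lies in $\pp_\psi$ and divides $u_\phi$, showing $u_\phi\in\pp_\psi$. Since the $|Q|$ variables $x_{\psi(q),q}$ for $q\in Q$ are pairwise distinct, $\pp_\psi$ is a monomial prime with $\height \pp_\psi=|Q|$.

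Next I would show that each $\pp_\psi$ is minimal over $L(P,Q)$: pick $\qq\in\Min(L(P,Q))$ with $\qq\subseteq\pp_\psi$. Then $\height\qq\leq|Q|$, so Proposition~\ref{fgh} yields $\qq=\pp_{\psi'}$ for some $\psi'\in\Hom(Q,P)$. Each generator $x_{\psi'(q),q}$ of $\pp_{\psi'}$ lies in the variable ideal $\pp_\psi$, so it must equal $x_{\psi(q'),q'}$ for some $q'$; comparing second indices forces $q=q'$ and hence $\psi'(q)=\psi(q)$ for all $q$, i.e.\ $\psi'=\psi$, so $\qq=\pp_\psi$. This proves $\pp_\psi\in\Min(L(P,Q))$ of height exactly $|Q|$.

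Statement~(a) now follows: the existence of some $\psi\in\Hom(Q,P)$ (e.g.\ any constant map) gives a minimal prime of height $|Q|$, hence $\height L(P,Q)\leq|Q|$; conversely Proposition~\ref{fgh} forces any minimal prime of height at most $|Q|$ to have height exactly $|Q|$, so $\height L(P,Q)\geq|Q|$. For (b) I would use the reformulation (\ref{min}). If $L(P,Q)^\vee=L(Q,P)^\tau$, then $\Min(L(P,Q))=\{\pp_\psi:\psi\in\Hom(Q,P)\}$, and each $\pp_\psi$ has height $|Q|$. Conversely, if every $\pp\in\Min(L(P,Q))$ has height $|Q|$, Proposition~\ref{fgh} writes each $\pp$ as some $\pp_\psi$, while the previous paragraph shows every $\pp_\psi$ lies in $\Min(L(P,Q))$, so (\ref{min}) holds and hence $L(P,Q)^\vee=L(Q,P)^\tau$. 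There is no real obstacle here; the only subtlety is that Proposition~\ref{fgh} a priori describes only minimal primes of height at most $|Q|$, so one has to invoke the fixpoint lemma once to show that every $\pp_\psi$ actually contains $L(P,Q)$, after which both parts of the corollary follow mechanically.
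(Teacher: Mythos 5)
Your proof is correct and follows the route the paper intends: the paper offers no written argument (it calls the corollary an immediate consequence of Proposition~\ref{fgh}), and your write-up supplies exactly the missing details --- in particular the use of the fixpoint lemma of \cite{FGH} applied to $\psi\circ\phi$ to show that every $\pp_\psi$ contains $L(P,Q)$ and is therefore a genuine minimal prime of height $|Q|$, which is what makes both (a) and the converse direction of (b) go through. Nothing further is needed.
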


We first show

\begin{Proposition}
\label{complete}
Let $P$ and $Q$ be posets such that $L(P,Q)^\vee=L(Q,P)^\tau$. Then $P$ or $Q$ is connected.
\end{Proposition}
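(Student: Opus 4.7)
\medskip
\noindent\textbf{Proof proposal.} I would prove the contrapositive: if both $P$ and $Q$ are disconnected, then $L(P,Q)^\vee \neq L(Q,P)^\tau$. In view of the criterion (\ref{min}), it suffices to exhibit some $\psi\in\Hom(Q,P)$ for which $\pp_\psi\notin\Min(L(P,Q))$; I plan to do this in the strongest possible way, by producing a $\psi$ such that $\pp_\psi$ does not even contain $L(P,Q)$.

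\medskip
\noindent\textbf{Setup and construction.} Write $P = P_1 + P_2$ and $Q = Q_1+Q_2$ as non-trivial direct sums, and fix elements $p_i\in P_i$ and $q_j\in Q_j$ for $i,j\in\{1,2\}$. Define $\psi\: Q\to P$ by $\psi(q)=p_1$ for $q\in Q_1$ and $\psi(q)=p_2$ for $q\in Q_2$. Because each $Q_j$ is closed under comparability in $Q$ and $\psi$ is constant on each summand, $\psi$ is isotone. The generators of $\pp_\psi$ are therefore
\[
\{x_{p_1 q}\: q\in Q_1\}\cup\{x_{p_2 q}\: q\in Q_2\}.
\]
Dually, define $\phi\: P\to Q$ by $\phi(p)=q_2$ for $p\in P_1$ and $\phi(p)=q_1$ for $p\in P_2$; again this is isotone for the same reason, and
\[
u_\phi \;=\; \prod_{p\in P_1} x_{p q_2}\cdot \prod_{p\in P_2} x_{p q_1}.
\]

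\medskip
\noindent\textbf{Key step.} I would now verify that no variable appearing in $u_\phi$ is a generator of $\pp_\psi$. A generator with second index in $Q_2$ must have first index $p_2\in P_2$; but the variables $x_{p q_2}$ in $u_\phi$ all have $p\in P_1$, and $P_1\cap P_2=\emptyset$ since the sum is direct. Symmetrically for the variables with second index $q_1$. Hence $u_\phi\notin \pp_\psi$, so $L(P,Q)\not\subseteq \pp_\psi$, which forces $\pp_\psi\notin\Min(L(P,Q))$. By (\ref{min}), this shows $L(P,Q)^\vee\neq L(Q,P)^\tau$.

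\medskip
\noindent\textbf{What I expect to be routine vs.\ hard.} The construction itself is straightforward once one realises that the disconnectedness of \emph{both} posets is precisely what allows the ``swap'' map $\phi$ to send each component of $P$ into the \emph{wrong} component of $Q$ while staying isotone; without this the mismatch between $\psi$ and $\phi$ could not be arranged. I do not foresee a serious technical obstacle: the only point worth stating carefully is that $\psi$ and $\phi$ are isotone (which reduces to the fact that comparabilities in a direct sum stay within a single summand), and that the $P_i$'s are pairwise disjoint so the generator-checking is trivial. Note also that this argument does \emph{not} rely on the FGH fixed-point lemma, which is unavailable here since a disconnected poset has neither a unique minimal nor a unique maximal element.
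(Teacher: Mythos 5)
Your proposal is correct and follows essentially the same argument as the paper: the paper also assumes both $P$ and $Q$ are disconnected, builds the same constant-on-components map $\psi\in\Hom(Q,P)$ and the same ``swapped'' map $\phi\in\Hom(P,Q)$, and derives the contradiction from $u_\phi\notin\pp_\psi$. The only difference is presentational (contrapositive versus contradiction), and your explicit check that the first indices of the variables of $u_\phi$ lie in the wrong components is exactly the point the paper leaves implicit.
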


\begin{proof}
Suppose that $P$ and $Q$ are both disconnected. Then there exists posets $P_1, P_2$ and $Q_1, Q_2$ such that $P=P_1 + P_2$ and $Q= Q_1 +Q_2$ with  posets $P_1, P_2$ and $Q_1,Q_2$. Since $L(P,Q)^\vee=L(Q,P)^\tau$ it follows that $\Min (L (P,Q))=\{\pp_\psi\:\; \psi\in \Hom(Q,P)\}$. Let $p_1\in P_1$ and $p_2\in P_2$. Then the map
\[
\psi(q)= \left\{ \begin{array}{ll}
          p_1, &  \text{if  $q \in Q_1$}, \\
        p_2, &\text{if  $q \in Q_2$}
                  \end{array} \right.
\]
is isotone, and hence
\[
\pp_\psi=(\{x_{p_1q}\:\, q\in Q_1\}\union \{x_{p_2q}\:\, q\in Q_2\})
\]
is a minimal prime ideal of $L(P,Q)$.

On the other hand,  let $q_1\in Q_1$  and $q_2\in Q_2$ and let
\[
\phi(p)= \left\{ \begin{array}{ll}
          p_2, &  \text{if  $p \in Q_1$}, \\
        p_1, &\text{if  $p \in Q_2.$}
                  \end{array} \right.
\]
Then $\phi\:\; P\to Q$ is isotone and hence $u_\phi=\prod_{p\in P_1}x_{pq_2}\prod_{p\in P_2}x_{pq_1}$ belongs to $L(P,Q)$, while $u_\phi\not\in \pp_\psi$, a contradiction.
\end{proof}

In further discussion we may assume that $P$ or $Q$ is connected.  In the next statement  we will assume that $P$ is connected.

\medskip
We call $P$ a {\em rooted} poset if for any two incomparable elements $p_1, p_2 \in P$ there is no element $p \in P$ such that $p > p_1, p_2$. Similarly we call $P$ a {\em co-rooted} poset if for any two incomparable elements $p_1, p_2 \in P$ there is no element $p \in P$ such that $p < p_1, p_2$.  Note that a poset which  is rooted and co-rooted  is a finite direct sum of totally ordered posets. Also, observe that if $P$ is connected and rooted then $P$ has a unique minimal element. Indeed, if $P$ has two distinct minimal element, say $a,b \in P$, then by using the definition of connected poset, we obtain a sequence $a=a_1, a_2,  \ldots, a_n=b$ in $P$ such that $a_i$ and $a_{i+1}$ are comparbable, for all $i= 1, \ldots, n-1$. This sequence is not a chain because $a$ and $b$ are incomparable. Thus, there exist three distinct elements $a_{i-1} <  a_{i} > a_{i+1}$ for some $i=2 \ldots, n-1$, which contradicts the definition of rooted poset. Similarly, if $P$ is connected and co-rooted then $P$ has a unique maximal element.

\begin{Theorem}
\label{dual}
Let $P$ and $Q$ be finite posets, and assume that $P$ is connected but not a chain.
\begin{enumerate}
\item[{\em(a)}] If P is rooted, then $L(P,Q)^{\vee} = L(Q,P)^{\tau}$ if and only if $Q$ is rooted.
\item[{\em(b)}] If P is co-rooted, then $L(P,Q)^{\vee} = L(Q,P)^{\tau}$ if and only if $Q$ is co-rooted.
\item[{\em(c)}] If $P$ is neither rooted nor co-rooted, then $L(P,Q)^{\vee} = L(Q,P)^{\tau}$ if and only if $Q$ is a direct sum of chains.
\end{enumerate}
\end{Theorem}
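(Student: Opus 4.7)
The equation $L(P,Q)^{\vee}=L(Q,P)^{\tau}$ is symmetric in $P$ and $Q$: applying $(-)^{\vee}$ and using that Alexander duality is an involution commuting with the index-switch $\tau$ yields $L(Q,P)^{\vee}=L(P,Q)^{\tau}$. Reversing the partial orders of $P$ and $Q$ simultaneously interchanges \emph{rooted} and \emph{co-rooted} without altering either side of the equation. Hence (b) follows from (a) by this order-reversal, and it is enough to prove (a) and (c).

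For the direction ``$Q$ rooted $\Rightarrow$ duality'' of (a), $P$ connected and rooted has a unique minimum, so Corollary~\ref{height}(b) reduces the claim to $\height\pp=|Q|$ for every $\pp\in\Min L(P,Q)$. Identify $\pp$ with the associated minimal hitting set $T\subseteq P\times Q$ (minimal with the property that it meets the graph of every $\phi\in\Hom(P,Q)$), and set $F_{q}=\{p\colon(p,q)\in T\}$; the goal is to show $|F_q|=1$ for each $q$. Suppose, for contradiction, that $p_1\neq p_2$ both lie in some $F_q$. Minimality of $T$ provides isotone $\phi_1,\phi_2\colon P\to Q$ with $\phi_i(p_i)=q$ and $\mathrm{graph}(\phi_i)\cap T=\{(p_i,q)\}$. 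The plan is to glue $\phi_1$ and $\phi_2$ into an isotone $\phi\colon P\to Q$ whose graph misses $T$ entirely, contradicting the hitting property. When $p_1<p_2$, put $\phi=\phi_1$ on $P_{\geq p_2}$ and $\phi=\phi_2$ elsewhere: rootedness of $P$ makes every principal downset a chain, so whenever $p<p'$ with $p'\geq p_2$ and $p\not\geq p_2$, the chain $P_{\leq p'}$ forces $p<p_2$, hence $\phi_2(p)\leq q\leq\phi_1(p')$. When $p_1,p_2$ are incomparable, rootedness makes $P_{\geq p_1}$ and $P_{\geq p_2}$ disjoint; put $\phi=\phi_2$ on $P_{\geq p_1}$ and $\phi=\phi_1$ on $P_{\geq p_2}$, and on the complement glue them using pointwise minima in the chain $Q_{\leq q}$ (well-defined because $Q$ is rooted).

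For the converse direction of (a), assume $Q$ is not rooted and fix $q_1,q_2\in Q$ incomparable with a common upper bound $\widetilde q$. Since $P$ is rooted and not a chain, it contains incomparable elements $p_1,p_2$, which by rootedness admit no common upper bound in $P$. For any isotone $\psi\in\Hom(Q,P)$, $\psi(q_1),\psi(q_2)\leq\psi(\widetilde q)$, so $\psi(q_1),\psi(q_2)$ share an upper bound in $P$ and therefore, by rootedness, must be comparable. Consequently no isotone $\psi$ satisfies $\psi(q_1)=p_1$ and $\psi(q_2)=p_2$ simultaneously. Exhibiting a minimal hitting set $T$ that contains $\{(p_1,q_1),(p_2,q_2)\}$ --- obtained by starting from, say, the graph of the constant map $Q\to P$ at the unique minimum of $P$ enlarged by these two entries, and then shrinking to a minimal cover while keeping them --- produces a minimal prime of $L(P,Q)$ not of the form $\pp_\psi$, so by (\ref{min}) duality fails.

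For (c), note that ``direct sum of chains'' is equivalent to ``rooted and co-rooted.'' ($\Leftarrow$) If $Q=C_1+\cdots+C_r$ with each $C_i$ a chain, then $P$ connected forces every isotone map $P\to Q$ to land in one $C_i$, whence $L(P,Q)=\sum_i L(P,C_i)$ in disjoint variable sets; the operations $(-)^{\vee}$ and $(-)^{\tau}$ distribute over this decomposition, and each $L(P,C_i)^{\vee}=L(C_i,P)^{\tau}$ is the chain case from Theorem~1.1 of \cite{EHM}. ($\Rightarrow$) If $Q$ is not a direct sum of chains then some connected component $Q_0$ of $Q$ is not a chain; the same decomposition shows duality for $(P,Q)$ passes to duality for $(P,Q_0)$, so we may assume $Q$ itself is connected and not a chain. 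If $Q$ is rooted or co-rooted, apply the already-proved (a) or (b) to $(Q,P)$ using the $P\leftrightarrow Q$ symmetry; the conclusion would force $P$ to be rooted or co-rooted, contradicting the hypothesis of (c). The remaining case --- $Q$ connected, not a chain, neither rooted nor co-rooted --- is handled by the dual of the necessity construction above, choosing incomparable $q_1,q_2\in Q$ with a common upper bound and incomparable $p_1,p_2\in P$ with a common \emph{lower} bound (available since $P$ is not co-rooted), and building a minimal hitting set incompatible with any $\psi$-graph. The main technical obstacle throughout is the isotonicity verification for the glued map $\phi$ in the incomparable-$p_1,p_2$ case of the sufficiency of (a), together with the explicit construction of the required minimal hitting sets in the necessity arguments; both rest on the structural fact that in a rooted poset every principal downset is a chain.
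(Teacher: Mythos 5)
Your strategy is the same as the paper's: reduce via Corollary~\ref{height}(b) to showing every minimal prime of $L(P,Q)$ has height $|Q|$, prove this by gluing two isotone maps into one whose graph misses the cover, and prove the necessity directions by exhibiting a bad minimal prime. The order-reversal reduction of (b) to (a), the comparable case $p_1<p_2$ of the gluing, the observation that principal downsets of a rooted poset are chains, and the decomposition $L(P,Q)=\sum_i L(P,C_i)$ in (c) are all fine. But there are two genuine gaps. First, in the sufficiency direction of (a) your gluing for incomparable $p_1,p_2$ is not well-defined: you set $\phi=\min(\phi_1,\phi_2)$ on the complement $C$ of $P_{\geq p_1}\cup P_{\geq p_2}$, claiming the values lie in the chain $Q_{\leq q}$. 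For $p\in C$ that is not a common lower bound of $p_1$ and $p_2$ (say an element of a third branch: $P$ a root with three incomparable atoms $p_1,p_2,p_5$, $Q$ a root with two incomparable atoms) there is no reason that $\phi_1(p),\phi_2(p)\leq q$, and rootedness of $Q$ does not force them to be comparable, so the minimum need not exist. The paper avoids this by taking $p_3$ the maximal common lower bound of $p_1,p_2$, noting $\phi_1(p_3),\phi_2(p_3)\leq q$ are comparable, assuming WLOG $\phi_1(p_3)\leq\phi_2(p_3)$, and gluing $\phi_2$ on $P_{\geq p_4}$ (where $p_3<p_4\leq p_1$ is the cover of $p_3$) with $\phi_1$ everywhere else; that asymmetric choice is what makes isotonicity verifiable.

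Second, in the necessity directions (the converse of (a) and the final case of (c)) you never actually produce the bad minimal prime. Your recipe --- take the graph of the constant map at the minimum of $P$, adjoin $(p_1,q_1)$ and $(p_2,q_2)$, and ``shrink to a minimal cover while keeping them'' --- cannot work: the constant map's graph is itself already a minimal cover, so the two adjoined entries are redundant in the enlarged set, and removing only other elements terminates at a cover that is not minimal, while any genuine minimal cover inside your set discards the two entries. One must exhibit a cover in which $x_{p_1q_1}$ and $x_{p_2q_2}$ are each irredundant; the paper writes down $\pp=(\{x_{p_1q}:q\geq q_1\}\cup\{x_{p_2q}:q\geq q_2\}\cup\{x_{p_3q}:q\ngeq q_1 \text{ and } q\ngeq q_2\})$ and checks minimality by producing, for each generator, an isotone map whose monomial is divisible by that generator alone; moreover this prime visibly has height $>|Q|$ because of $q_3$. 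Note also that in the final case of (c) your phrase ``incompatible with any $\psi$-graph'' is off target: there $P$ is not rooted, so an isotone $\psi$ with $\psi(q_i)=p_i$ may well exist; the obstruction is the excess height, not non-isotonicity.
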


\begin{proof}
(a) Assume that $L(P,Q)^{\vee} = L(Q,P)^{\tau}$ and that $Q$ is not rooted. Then there exists $q_1,q_2,q_3 \in Q$ such that $q_1$ and $q_2$ are incomparable and $q_1,q_2 <q_3$. Let $p_1, p_2 \in P$ be a pair of incomparable elements. Since $P$ is rooted and not a chain, there exists $p_3 \in P$ such that $p_3 < p_1, p_2$. We claim that
\[
\pp=(\{x_{p_1q}\; : \; q \geq q_1\} \cup \{x_{p_2q}\; :\;q \geq q_2\} \cup \{x_{p_3q}\;: \; q\ngeq q_1  \text{ and } q \ngeq q_2 \})
\]
is a minimal prime ideal of $L(P,Q)$ with $\height \pp > |Q|$. This will provide a contradiction to Corollary~\ref{height}(b).

To prove our claim, we first show that $L(P,Q) \subset \pp$. Assume that there exists $\phi \in \Hom(P,Q)$ such that  $u_\phi \notin \pp$. Then $\phi(p_1) \ngeq q_1$ and $\phi(p_2) \ngeq q_2$, and moreover, $\phi(p_3) \geq q_1$ or $\phi(p_3) \geq q_2$. We may assume that $\phi(p_3) \geq  q_1$. Then $ q_1 \leq \phi(p_3) \leq \phi(p_1)$ contradicting the fact that $\phi(p_1) \ngeq q_1$. Hence, $L(P,Q) \subset \pp$.

Now we show that $\pp$ is a minimal prime ideal of $L(P,Q)$.  Due to Corollary~\ref{height}, for all $q \in Q$, there exists $p \in P$ such that $x_{pq} \in \pp$.  This implies that we can not skip the variable $x_{pq}$ from generators of $ \pp$ if $q$ appears only once as the second index. Assume now that $q \in Q$ appears twice as a second index among the generators of $\pp$. Then $q > q_1, q_2$ and $x_{p_1q}, x_{p_2q} \in \pp$. Now we show that we can not skip any of $x_{p_1q}$ or $x_{p_2q}$ from the set of generators of $\pp$.


Indeed, let $\psi: P \rightarrow Q$ given by

\[
\psi(p)= \left\{ \begin{array}{ll}
          q, &  \text{if  $p \geq p_1$}, \\
        q_1, &\text{otherwise}.
                  \end{array} \right.
\]

Note that $\psi$ is an isotone map. In fact, let $p , p' \in P$ with $p \geq p'$. We have to show that $\psi(p) \geq \psi (p')$. This is obvious  if  $p,p' \geq p_1$ or $p, p' \ngeq p_1$. The only case which remains is that $p \geq p_1, p' \ngeq p_1$. But in this case we have $\phi(p) = q > q_1 = \phi(p')$.

Since $\psi $ is an isotone map, it follows that $u_{\psi} \in \pp$. Since $x_{p_1q}$ is the only generator of $\pp$ which divides $u_{\psi}$, this generator of $\pp$ can not be skipped. Similarly, one can show that $x_{p_2q}$ can not be skipped as a generator of $\pp$. It shows that $\pp$ is indeed a minimal prime ideal of $L(P,Q)$.

Conversely,  suppose that $Q $ is a rooted poset and $L(P,Q)^{\vee} \neq L(Q,P)^{\tau}$. Then by using Corollary~\ref{height} (b), we see that there exists a minimal prime ideal $\pp$ of $L(P,Q)$ with $\height \pp >|Q|$. This implies that there exists an element $q \in Q$ such that $x_{p_1q}, x_{p_2q} \in \pp$ for some $p_1, p_2 \in P$ with $p_1 \neq p_2$. Since $\pp$ is a minimal prime ideal,  neither $x_{p_1q}$ nor $x_{p_2q}$ can be skipped from the set of generators of $\pp$. It implies that there exist $\phi_1, \phi_2 \in \Hom(P,Q)$ such that $x_{p_1q}$ is the only generator of $\pp$ which divides $u_{\phi_1}$ and $x_{p_2q}$ is the only generator of $\pp$ which divides $u_{\phi_2}$.

Suppose first that $p_1$ and $p_2$ are comparable. We may assume that $ p_2> p_1$. Then $\phi_1(p_2) > q= \phi_1 (p_1)$, otherwise $u_{\phi_1}$ is divisible by both $x_{p_1q}$ and $x_{p_2q}$. Similarly, $\phi_2(p_1) < q= \phi_2 (p_2)$.

Let $\psi : P \rightarrow Q$ given by

 \[
\psi(p)= \left\{ \begin{array}{ll}
          \phi_1(p), &  \text{if  $p \geq p_2$}, \\
         \phi_2 (p), &\text{otherwise}.
                  \end{array} \right.
\]

We claim that $\psi $ is an isotone map. To see this it suffices to show that $\psi(p) \geq \psi(p')$ for $p > p'$ and $p \geq p_2$, $p' \ngeq p_2$. Suppose that  $p' < p_2$ then $\psi(p) = \phi_1 (p) > q > \phi_2(p) = \psi(p') $. Suppose that $p' \not< p_2$, then $p'$ and $p_2$ are incomparable. This case is not possible since $p > p_2$ and $p > p'$ and since $P$ is rooted.

Following the construction of $\psi$, we see that  $u_\psi \notin \pp$. This contradicts the fact that $L(P,Q) \subset \pp$.

\medskip

Finally assume that $p_1$ and $p_2$ are incomparable.  Since $P$ is rooted, there exists a unique maximal element $p_3 \in P$ such that $p_3 < p_1, p_2$.  Therefore, $\phi_1 (p_3), \phi_2(p_3) \leq q$. Since $Q$ is rooted, it follows that $\phi_1 (p_3), \phi_2(p_3)$ are comparable. We may assume that $\phi_1 (p_3) \leq \phi_2(p_3)$.

There exists a unique element  $p_4$ with the property $p_3 < p_4 \leq p_1$, because $P$ is rooted.  Let $\psi : P \rightarrow Q$ given by

 \[
\psi(p)= \left\{ \begin{array}{ll}
          \phi_2(p), &  \text{if  $p_4 \leq p$}, \\
         \phi_1 (p), &\text{otherwise}.
                  \end{array} \right.
\]

We claim that $\psi$ is an isotone map. To prove this it suffices to show that $\psi (p) > \psi(p')$ for $p > p'$ with $p \geq p_4$, $p' \ngeq p_4$. If $p' < p_4$ then note that $p' \leq p_3 < p_4$ because $P$ is rooted. Then $\psi(p)=\phi_2(p) \geq \phi_2(p_4)  \geq \phi_2 (p_3) \geq \phi_1(p_3) \geq \phi_1(p')= \psi(p')$ . If $p'$ and $p_4$ are incomparable then $p$ and $p'$ are  also incomparable because $P$ is rooted. It shows that $\psi$ is an isotone map and $u_\psi \notin \pp$, a contradiction.

Statement (b) is proved in the same way.

(c) Let $L(P,Q)^{\vee} = L(Q,P)^{\tau}$ and assume that $Q$ is not a direct sum of chains. Then there exists $ q_1, q_2, q_3 \in Q$ such that $q_1$ and $q_2$ are incomparable and either $q_3 < q_1, q_2$ or $q_1, q_2 < q_3$. Assume that $q_1, q_2 < q_3$. Since $P$ is neither rooted nor co-rooted we have $p_1, p_2, p_3$ such that $p_1$ and $p_2$ are incomparable and $p_3 < p_1, p_2$.Then by following the proof of (a) we obtain a minimal prime ideal of $L(P,Q)$ of height greater than $|Q|$, which is not possible. Similarly, one can show that it is not possible to have  $ q_1, q_2, q_3 \in Q$ such that $q_1$ and $q_2$ are incomparable and $q_3 < q_1, q_2$. It follows that $Q$ is a direct sum of chains.

Conversely, assume that $Q$ is the direct sum of the chains $Q_1, Q_2, \ldots, Q_n$. Then $L(P,Q)^\vee = (L(P,Q_1) + \cdots + L(P,Q_n))^\vee = L(P,Q_1)^\vee  \cdots  L(P,Q_n)^\vee$. By \cite[Proposition 1.2]{FGH},  $L(P,Q_i) = L(Q_i, P)^\tau$. Therefore,
\[
L(P,Q)^\vee= \prod_{i=i}^n L(Q_i, P)^\tau = (\prod_{i=i}^n L(Q_i, P))^\tau = L(Q,P)^\tau
\]
\end{proof}

As the  final conclusion we obtain

\begin{Corollary}
\label{final} The following conditions are equivalent:
\begin{enumerate}
\item[{\em (a)}] $L(P,Q)^\vee=L(Q,P)^\tau$.
\item[{\em (b)}]  $P$ is connected or $Q$ is connected, and  one of the following conditions is satisfied:
\begin{enumerate}
\item[{\em (i)}] $P$ and $Q$ are rooted;
\item[{\em (ii)}] $P$ and $Q$ are co-rooted;
\item[{\em (iii)}] $P$ is connected  and $Q$ is a sum of chains;
\item[{\em (iv)}]$Q$ is connected and $P$ is a sum of chains;
\item[{\em (v)}] $P$ is a chain or $Q$ is a chain.
\end{enumerate}
\end{enumerate}
\end{Corollary}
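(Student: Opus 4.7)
The plan is to derive Corollary~\ref{final} as a formal consequence of Proposition~\ref{complete} and Theorem~\ref{dual}, together with the classical chain case recorded in the Introduction (\cite[Theorem 1.1]{EHM}, equivalently \cite[Proposition 1.2]{FGH}). A preliminary observation I would record explicitly is the symmetry
\[
L(P,Q)^\vee = L(Q,P)^\tau \iff L(Q,P)^\vee = L(P,Q)^\tau,
\]
which holds because Alexander duality is an involution on squarefree monomial ideals and commutes with the index-switching operation $(\cdot)^\tau$. This allows us to relabel so that, in any configuration, whichever of $P,Q$ is singled out by a connectedness hypothesis plays the role of $P$.

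For the direction $(a)\Rightarrow(b)$, I would begin by invoking Proposition~\ref{complete} to obtain that $P$ or $Q$ is connected, and use the symmetry to assume $P$ is connected. If $P$ is a chain, then condition (v) holds. Otherwise $P$ is connected and not a chain, so Theorem~\ref{dual} applies; since a connected poset which is simultaneously rooted and co-rooted must be a chain (by the remark preceding Theorem~\ref{dual}), the three alternatives rooted/co-rooted/neither are mutually exclusive under our hypothesis, and parts (a), (b), (c) of Theorem~\ref{dual} deliver conditions (i), (ii), (iii) of the corollary respectively.

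For $(b)\Rightarrow(a)$, I would split on the listed sub-cases. If (v) holds, then $L(P,Q)^\vee = L(Q,P)^\tau$ by the letterplace/co-letterplace duality cited above. Otherwise, neither $P$ nor $Q$ is a chain, and using the symmetry I assume that $P$ is the connected poset; case (iv) then becomes case (iii) under the relabeling. In each remaining case Theorem~\ref{dual} applies: under (i) both posets are rooted and Theorem~\ref{dual}(a) yields the conclusion; under (ii) Theorem~\ref{dual}(b) does; and under (iii), $Q$ is a direct sum of chains and therefore simultaneously rooted and co-rooted, so whichever of (a), (b), (c) of Theorem~\ref{dual} matches the type of $P$ finishes the proof. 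I do not expect a genuine technical obstacle here — the argument is entirely organizational — but the delicate point is to be systematic about the symmetry, keeping the connected non-chain poset consistently in the role of $P$, and to exploit the fact that a direct sum of chains is compatible with all three alternatives of Theorem~\ref{dual} so that case (iii) is closed regardless of the type of $P$.
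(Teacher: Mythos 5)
Your proposal is correct and follows essentially the same route as the paper: the paper's own proof consists precisely of citing the chain case from \cite{EHM}, Proposition~\ref{complete}, Theorem~\ref{dual}, and the symmetry $L(P,Q)^\vee=L(Q,P)^\tau \iff L(Q,P)^\vee=L(P,Q)^\tau$ coming from the fact that Alexander duality and $\tau$ are commuting involutions. Your write-up merely makes the case analysis (and the observation that a connected rooted-and-co-rooted poset is a chain, and that a sum of chains satisfies all three alternatives of Theorem~\ref{dual}) explicit, which the paper leaves to the reader.
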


\begin{proof}
The result follows \cite[Theorem 1.1]{EHM}, Proposition~\ref{complete} and Theorem~\ref{dual} observing that
\begin{eqnarray}
\label{selfie}
L(P,Q)^\vee =L(Q,P)^\tau \quad \iff \quad L(Q,P)^\vee =L(P,Q)^\tau.
\end{eqnarray}
The statement (\ref{selfie}) is a consequence of  the fact that Alexander duality as well as the operator $\tau$ are involutary and commute with each other. Thus if $L(P,Q)^\vee =L(Q,P)^\tau$, then
\[
(L(Q,P)^\vee)^\tau=(L(Q,P)^\vee)^\tau=(L(P,Q)^\tau)^\tau=L(P,Q),
\]
which implies that $L(Q,P)^\vee=L(P,Q)^\tau$. This show ``\implies". The other direction follows by symmetry.
\end{proof}

\end{document}